\newtheorem{theorem}{Theorem}[section]
\newtheorem{lemma}[theorem]{Lemma}
\newtheorem{corollary}[theorem]{Corollary}
\newtheorem{definition}[theorem]{Definition}
\theoremstyle{definition}
\numberwithin{equation}{section}
\newcounter{flipflop}
\tikzset{
on each straight segment/.style={
    decorate,
    decoration={
        show path construction,
        moveto code={},
        lineto code={
            \path [#1]
            (\tikzinputsegmentfirst) -- (\tikzinputsegmentlast);
        },
        curveto code={
            \path  (\tikzinputsegmentfirst)
            .. controls
            (\tikzinputsegmentsupporta) and (\tikzinputsegmentsupportb)
            ..
            (\tikzinputsegmentlast);
        },
        closepath code={
            \path 
            (\tikzinputsegmentfirst) -- (\tikzinputsegmentlast);
        },
    },
},
set flipflop/.code=\setcounter{flipflop}{#1},
on each other straight segment/.style={
    decorate,
    decoration={
        show path construction,
        moveto code={},
        lineto code={\stepcounter{flipflop}
            \path \ifodd\value{flipflop} [#1]\fi
            (\tikzinputsegmentfirst) -- (\tikzinputsegmentlast);
        },
        curveto code={
            \path  (\tikzinputsegmentfirst)
            .. controls
            (\tikzinputsegmentsupporta) and (\tikzinputsegmentsupportb)
            ..
            (\tikzinputsegmentlast);
        },
        closepath code={
            \path 
            (\tikzinputsegmentfirst) -- (\tikzinputsegmentlast);
        },
    },
},
mid arrow/.style={postaction={decorate,decoration={
            markings,
            mark=at position .5 with {\arrow[#1]{stealth}}
}}},
}
\begin{document}

\title[Dehn function for Palindromic Sub-Group of Automorphism Group]{The Dehn function for Palindromic Sub-Group of $Aut(F_n)$}

\author[K. Gongopadhyay]{KRISHNENDU GONGOPADHYAY}

\author[L. Kundu]{LOKENATH KUNDU}

\address{
Indian Institute of Science Education and Research (IISER) Mohali,
		Knowledge City, Sector 81, SAS Nagar, Punjab 140306, India}
	\email{krishnendu@iisermohali.ac.in}

\address{Indian Institute of Science Education and Research (IISER) Mohali,
		Knowledge City, Sector 81, SAS Nagar, Punjab 140306, India.}
	\email{lokenath@iisermohali.ac.in}

\makeatletter
\@namedef{subjclassname@2020}{\textup{2020} Mathematics Subject Classification}
\makeatother

\subjclass[2020]{Primary 20F65; Secondary 57S25, 53C24, 05E16, 20E07}

\keywords{Dehn function, palindromic automorphism group.}

\maketitle



\begin{abstract}
  In this paper, we prove that the Dehn function of the palindromic automorphism group $\Pi A(F_n)$ is exponential. 
\end{abstract}

\section{Introduction}
 Let $G~=~\langle ~ X~|~R~\rangle$ be a finitely generated group. The word problem for $G$ is the method of determining whether two words in the generators represent the same element. The Dehn function is about how hard it is to solve word problems in a finitely presented group. To gain a deeper understanding of the Dehn function see \cite{Riley, Riley1}. If a group $G$ acts cocompactly and properly on a simplicial complex $S$, then the Dehn function of $G$ is asymptotically equivalent to the function that provides the optimal upper bound on the area of the least area discs in $S.$ Here, the restriction is stated as a function of the length of the boundary of the disc. In \cite{Bridson} authors proved that the Dehn function is exponential for $Out(F_n)$ and $Aut(F_n)$ for $n\geq 3$. 
 
 The subgroup of $Aut(F_n)$, consisting of all automorphisms that transform the generator of $F_n$ into palindromic words of $F_n,$ is known as the palindromic automorphism group. The palindromic group was introduced by Collins in \cite{Collins}. Let $a_1,a_2,\dots,a_n$ be the $n$-generators of $F_n.$ Following the notion by the authors in \cite{Glover}, a reduced word $a_1^{l_1}a_2^{l_2}\dots a_n^{l_n}$ is called a palindrome if $a_1^{l_1}a_2^{l_2}\dots a_n^{l_n}=a_n^{l_n}a_{n-1}^{l_{n-1}}\dots a_1^{l_1}.$ An automorphism $\alpha~\in~ Aut(F_n)$ is said to be a palindromic if $\alpha(a_i)$ is a palindromic for all $i$. The set of all palindromic automorphisms forms a subgroup of $Aut(F_n)$, and we denote it by $\Pi A(F_n)$.  For more details on the palindromic automorphism group, see \cite{Glover,Collins,KG}.
 
In this article, we will determine the Dehn function of $\Pi A(F_n)$. We prove the following:
\begin{theorem}\label{palin}
    The Dehn function of $\Pi A(F_n)$ is exponential, for $n\geq 3$. 
\end{theorem}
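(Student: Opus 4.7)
My plan is to prove Theorem \ref{palin} by establishing matching exponential upper and lower bounds on the Dehn function of $\Pi A(F_n)$.

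For the upper bound, I would exploit the inclusion $\Pi A(F_n) \hookrightarrow Aut(F_n)$ together with the exponential Dehn function of $Aut(F_n)$ from \cite{Bridson}. The first step is to verify that $\Pi A(F_n)$ is undistorted (quasi-isometrically embedded) in $Aut(F_n)$ with respect to the standard generating sets; this should follow from expressing each Nielsen generator of $Aut(F_n)$ as a bounded-length word in the palindromic generators along a coarse retract. Once this is in place, any null-homotopic word of length $k$ in $\Pi A(F_n)$ admits a van Kampen diagram of area $\exp(O(k))$ over the presentation of $Aut(F_n)$, which can be converted into a filling over the presentation of $\Pi A(F_n)$ at the cost of at most a polynomial blow-up that is absorbed into the exponential bound.

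For the lower bound, my strategy follows the Bridson--Vogtmann template: exhibit a family of null-homotopic words of linear length whose areas are forced to grow exponentially. The first ingredient is a palindromic automorphism of exponential growth. A natural candidate is a product $\phi = P_{12}P_{21}$ of elementary palindromic transvections $P_{ij}\colon a_i\mapsto a_ja_ia_j$ (fixing the remaining generators); a direct computation should yield $|\phi^k(a_1)|\gtrsim C^k$ for some $C>1$, modelled on Fibonacci-type growth of a matching $SL_2(\mathbb{Z})$ image. I would then identify a suitable second palindromic automorphism $\alpha$ (again of $P_{ij}$-type) so that the conjugate $\phi^k\alpha\phi^{-k}$, viewed as an element of $\Pi A(F_n)$, admits no representation in the chosen generating set shorter than $C^k$. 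Equating this with a tautological expression recording its action on each $a_i$ produces the desired family of trivial words of length $O(k)$ with area at least $C^k$, giving $\delta_{\Pi A(F_n)}(k)\succeq \exp(k)$.

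The principal obstacle is the last step. In the proof for $Aut(F_n)$, exponential distortion is supplied essentially for free by the inner automorphism subgroup together with the identity $\phi i_a\phi^{-1}=i_{\phi(a)}$. Since $Inn(F_n)\not\subset \Pi A(F_n)$ (conjugation by a generator $a_j$ sends $a_i$ to $a_ja_ia_j^{-1}$, which is not a palindrome), one must construct a palindromic substitute by hand and verify that no unexpected short relations in $\Pi A(F_n)$ permit $\phi^k\alpha\phi^{-k}$ to be rewritten with subexponentially many letters. I expect this to require a length-distorting invariant on $\Pi A(F_n)$, likely derived from combining the induced action on the abelianization $H_1(F_n;\mathbb{Z})=\mathbb{Z}^n$ (which factors through the signed-permutation image $(\mathbb{Z}/2)^n\rtimes S_n$) with the free-group dynamics of $\phi$ on appropriately chosen palindromic test words, to certify that cancellations among the palindromic generators cannot shorten $\phi^k\alpha\phi^{-k}$ below the exponential threshold.
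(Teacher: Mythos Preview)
Your lower-bound strategy has a real gap, and it is precisely the one you flag yourself: since $Inn(F_n)\not\subset \Pi A(F_n)$, the Bridson--Vogtmann distortion mechanism does not transplant directly, and your proposed remedy (a bespoke length-distorting invariant on $\Pi A(F_n)$ built from the abelianisation together with ``free-group dynamics on palindromic test words'') is left entirely unspecified. There is no indication of what the invariant would be, why it is Lipschitz in the word length, or why it blows up on $\phi^k\alpha\phi^{-k}$; as it stands this is a hope, not an argument. The paper bypasses this difficulty altogether. Instead of working internally to $\Pi A(F_n)$, it uses the simplicial forgetful map $\phi_3 \colon L_{\Pi A(F_3)} \to K_3$ from the Glover--Jensen complex for $\Pi A(F_3)$ to the spine of outer space. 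The key observation is that each palindromic generator factors as $A_{ij}=\rho_{ij}\lambda_{ij}$, so the explicit null-homotopic words
\[
W_p \;=\; A_{ij}^{-p}A_{kj}^{-p}A_{ij}^{\,p}A_{ki}A_{ij}^{\,p}A_{kj}^{-p}A_{ij}^{-p}A_{ki}^{-1}
\]
in $\Pi A(F_3)$ determine loops $l_p$ of linear length in $L_{\Pi A(F_3)}$ whose images $\phi_3\circ l_p$ in $K_3$ are exactly loops of the type already known, by \cite{Vogtmann}, to require exponential filling area. Since simplicial maps cannot increase area (Lemma~\ref{2.3}), the exponential lower bound for $l_p$ in $L_{\Pi A(F_3)}$ follows immediately, with no need to analyse distortion inside $\Pi A(F_n)$ at all. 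The passage from general $n$ to $n=3$ is handled by the commuting square of simplicial maps in Section~3 together with Corollary~\ref{2.4}.

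Your upper-bound sketch is also problematic: an undistorted inclusion $H\hookrightarrow G$ does \emph{not} in general yield $\delta_H \preceq \delta_G$, so ``converting'' a van Kampen diagram over $Aut(F_n)$ into one over $\Pi A(F_n)$ genuinely requires the coarse Lipschitz retraction $Aut(F_n)\to \Pi A(F_n)$ that you allude to but never construct, and which is far from obvious for an infinite-index subgroup. (The paper is itself terse on the upper bound, simply citing \cite{Hatcher}; the natural justification is that the Hatcher--Vogtmann combing of auter space restricts to the contractible invariant subcomplex $L_{\Pi A(F_n)}$ on which $\Pi A(F_n)$ acts properly and cocompactly.)
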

After discussing preliminary results and constructions, we established two lemmas in Section 3 that play a crucial role in proving the $Theorem \ref{palin}$. We prove this theorem in Section 4. 
\subsection{Acknowledgements}
The authors acknowledge support from the SERB  core research grant CRG/2022/003680  during the course of this work.
\subsection{Competing Interest}
We declare that there are no competing interests associated with this manuscript.

\section{Preliminaries}
\noindent In this section, we establish certain notations and revisit essential background information that will be employed consistently in the subsequent discussions within this document.
\subsection{Dehn function of a group} Consider a finitely generated group $G$ with  presentation $G=\langle S~|~R \rangle$. So $G$ is isomorphic to the quotient group $F(S)/N,$ where $F(S)$ represents the free group generated by the set $S,$ and $N$ is a normal subgroup of the free group $F(S)$. The elements in $R$ consist of cyclically reduced words, and every generator in $N$ is a conjugate of an element in $R.$ If a reduced word $w$ in  $F(S)$ represents the identity in $G$, we say that $w$ is null-homotopic and it is denoted as $w\equiv 1$ in $G$. Furthermore, this implies that $w$ belongs to $N=\langle\langle R\rangle\rangle.$ \begin{definition}{\rm (Area of null-homotopic words)}
If $w$ is equivalent to $1$ in the group $G,$ then $w$ can be expressed as the product $\displaystyle \prod_{i=1}^d g_ir_i^{\pm 1} g_i^{-1},$ where each $r_i$ belongs to the set $R.$ The area of $w$ is defined by the expression: \begin{equation}
    Area (w)=\min \lbrace d~|~w=\displaystyle \prod_{i=1}^d g_ir_i^{\pm 1} g_i^{-1}\rbrace.
\end{equation}
\end{definition}
\noindent The Dehn function quantifies the complexity of the word problem for any given group, and its definition is as follows:
\begin{definition}\label{DoG}
The Dehn function for a group $G$, denoted as $\delta_{(G,S)}: \mathbb{N}\rightarrow \mathbb{N}$, is defined as:
    \begin{equation}
    \begin{aligned}
        \delta_{(G,S)}(n)=\sup \lbrace Area(w)~| ~|w|\leq n,~w \equiv 1 \rbrace.
    \end{aligned}
    \end{equation}
Here, $S$ is any generating set for $G$, and $|w|$ represents the word length of an element $w \in F(S)$.
\end{definition}
 The Dehn function $\delta_{(G,S)}(n)$ is independent of generating sets up to an equivalence relation for any group $G=\langle S~|~R\rangle$. Henceforth we will denote the Dehn function by $\delta(n)$ of any group $G$. To explore more about the Dehn function see \cite{Riley, Riley1}. 

 In this context, we will have the next two outcomes without proof which will be crucial for demonstrating the Theorem \ref{palin}. To find a detailed proof of the following results, see \cite{Bridson}.
  \begin{lemma} \label{2.3}
      If $A$ and $B$ are 1-connected simplicial complexes, $F:~ A \rightarrow ~B$ is a simplicial map, and $l$ is a loop in the 1-skeleton of $A,$ then $Area_A(l)~\geq ~ Area_B(F\circ l).$   
  \end{lemma}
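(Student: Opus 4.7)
The plan is to push a minimal-area filling of $l$ in $A$ forward through $F$ and show that it provides a filling of $F \circ l$ in $B$ of no greater area. Set $m = Area_A(l)$. Since $A$ is $1$-connected, $l$ bounds a simplicial disc, and by the definition of combinatorial area there exists a simplicial map $\phi : D \to A$ from a triangulated $2$-disc $D$ containing exactly $m$ two-simplices, with $\phi|_{\partial D}$ parametrizing $l$.

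The key observation is that a composition of simplicial maps is simplicial, so $F \circ \phi : D \to B$ is a simplicial map whose restriction to $\partial D$ is precisely $F \circ l$; this exhibits $F \circ \phi$ as a simplicial filling disc for $F \circ l$ in $B$. Now each of the $m$ two-simplices of $D$ is sent by $F \circ \phi$ to a simplex of $B$ of dimension $0$, $1$, or $2$. The ones that collapse to a lower-dimensional simplex contribute nothing to the filling, while the remaining simplices contribute one $2$-cell each. Consequently $F \circ \phi$ fills $F \circ l$ using at most $m$ two-cells of $B$, and therefore $Area_B(F \circ l) \leq m = Area_A(l)$.

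The one point requiring care is to match this geometric notion of area with the algebraic definition (the minimum number of factors in an expression $\prod g_i r_i^{\pm 1} g_i^{-1}$) recorded earlier in the paper. For the $1$-connected simplicial complexes relevant here, typically universal covers of presentation $2$-complexes on which the group acts cocompactly and properly, the two notions coincide (up to constants) via the standard van Kampen diagram correspondence: an algebraic filling of a null-homotopic word produces a simplicial disc whose number of $2$-cells equals the number of conjugate-of-relator factors, and conversely every simplicial filling yields such a product by reading off the attaching words of its $2$-cells. I expect this dictionary to be the main technical hurdle to set up precisely, though it is entirely routine and well documented; once in place, the two short paragraphs above finish the argument.
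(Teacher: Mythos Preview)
Your first two paragraphs give exactly the standard argument, and it is correct: take a minimal simplicial filling disc for $l$ in $A$, compose with the simplicial map $F$, and observe that each $2$-simplex of the domain maps to a simplex of dimension at most $2$ in $B$, so the resulting filling of $F\circ l$ has no more $2$-cells than you started with. This is precisely the argument in \cite{Bridson}, which the paper cites in lieu of a proof; the paper itself states this lemma without proof.

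Your third paragraph, however, is unnecessary and slightly misdirected. The quantities $Area_A(l)$ and $Area_B(F\circ l)$ in the lemma are the \emph{combinatorial} filling areas of loops in the simplicial complexes $A$ and $B$; they are not the algebraic area of words in a group presentation. No van~Kampen correspondence is needed to prove the lemma as stated. The translation between the group-theoretic Dehn function and the isoperimetric function of the complex on which the group acts is a separate (and standard) step, invoked elsewhere in the paper via the remark that a proper cocompact action makes the two functions asymptotically equivalent. So you may simply delete the last paragraph: the proof is complete after the second.
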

   \begin{corollary} \label{2.4}
      Let $A,B$ and $C$ 1-connected simplicial complexes with simplicial maps \begin{equation*}
          A~\rightarrow ~B\rightarrow ~ C.
      \end{equation*}\label{cor}
      Let $l_n$ be a sequence of simplicial loops in $A$ whose length is bounded above by a linear function of $n,$ let $\overline{l_n}$ be the image loops in $C$ and let $\alpha(n)= Area_C(\overline{l_n}).$ Then the Dehn function of $B$ satisfies $\delta_B(n)\succeq \alpha (n).$  
  \end{corollary}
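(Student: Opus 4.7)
The plan is to chain together three simple facts: (1) simplicial maps do not increase the combinatorial length of edge-loops, (2) the definition of the Dehn function of $B$ gives a filling bound $\delta_B(m)$ for any null-homotopic loop of length $\le m$ in $B$, and (3) the previous \lemref{2.3} says that simplicial maps do not increase combinatorial area either.

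More precisely, write $f\colon A\to B$ and $g\colon B\to C$ for the two maps, and suppose $|l_n|_A \le Kn$ as given by the linear bound hypothesis. Since each edge in a simplicial map either maps to an edge or collapses to a vertex, $|f(l_n)|_B \le Kn$. Because $B$ is $1$-connected, $f(l_n)$ bounds a disc in $B$ and by the definition of the Dehn function
\[
  \mathrm{Area}_B\bigl(f(l_n)\bigr) \le \delta_B(Kn).
\]
Applying \lemref{2.3} to the map $g$ and the loop $f(l_n)$ yields
\[
  \alpha(n) \;=\; \mathrm{Area}_C(\overline{l_n}) \;\le\; \mathrm{Area}_B\bigl(f(l_n)\bigr).
\]
Combining the two estimates gives $\alpha(n) \le \delta_B(Kn)$, and the linear factor $K$ is absorbed by the standard equivalence relation on Dehn functions, so $\delta_B(n) \succeq \alpha(n)$.

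The only subtle point, and really the main obstacle, is that \defref{DoG} defines $\delta_B$ via a group presentation, whereas \lemref{2.3} speaks about combinatorial area in a simplicial complex. I would bridge this by invoking the setup mentioned in the introduction: when a group acts properly and cocompactly on a $1$-connected simplicial complex, the geometric filling function on the complex is asymptotically equivalent to the group-theoretic Dehn function. Once that identification is in place, the chain of inequalities above is essentially mechanical and the corollary reduces to a one-line composition argument.
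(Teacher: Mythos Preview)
The paper does not give its own proof of this corollary: it is stated explicitly ``without proof'' and the reader is referred to \cite{Bridson}. Your argument is correct and is precisely the standard one found there: simplicial maps do not increase the length of edge-loops, so $f(l_n)$ has length $\le Kn$ in $B$; the filling function of $B$ then bounds $\mathrm{Area}_B(f(l_n))$ above by $\delta_B(Kn)$; and \lemref{2.3} applied to $g$ gives $\alpha(n)\le \mathrm{Area}_B(f(l_n))$. Your closing remark about reconciling the combinatorial isoperimetric function of the complex $B$ with the group-theoretic Dehn function of \defref{DoG} is apt, since the paper only defines $\delta$ for groups; in \cite{Bridson} one works directly with the filling function of the complex, so no such bridging is required there.
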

  
\subsection{Palindromic automorphism group}
The palindromic automorphism group $\Pi A(F_n)$ is generated by the following automorphisms:
\begin{itemize}
    \item Maps $A_{ij}=(a_i||a_j),~i\neq j,$ which maps $a_i\mapsto a_ja_ia_j$ and fix all other generators $a_k,~k\neq i$.
\item Maps $\sigma_{a_i}$ which maps $a_i\mapsto a_i^{-1}$ and fix all other generators $a_k.$
\item Maps corresponding to elements of the symmetric group $S_n$ which permute the 
    $a_1,a_2,\\ \dots,a_n$
 among themselves.
\end{itemize}
 The subgroup of $\Pi A(F_n)$ which is generated by $A_{ij}$ only is called elementary palindromic subgroup $E\Pi A(F_n).$ Moreover $\Pi A(F_n)= ~E\Pi A(F_n)\rtimes(\mathbb{Z}/2 ~ \wr ~ S_n).$ Now \begin{equation}
     E \Pi A(F_n)= \langle A_{ij}~|~ 1\leq~ i,j\leq ~n,~ [A_{ik},A_{jk}],~[A_{ik},A_{jl}], ~A_{ik}A_{jk}A_{ij}=A_{ij}A_{jk}A_{ik}^{-1}\rangle. 
 \end{equation} 
To explore more about the palindromic group see \cite{Glover}.
 \subsection{Simplicial complex associated to the $\Pi A(F_n)$.} In this section we briefly describe a simplicial complex $L_{\Pi A(F_n)}$ on which the group $\Pi A(F_n)$ acts with finite stabilizer and finite quotient. The space $L_{\Pi A(F_n)}$ was introduced by Glover and Jensen in \cite{Glover}.\\
 Let $X_n$ be the spine of auter space \cite{Glover, Hatcher} which is a contractible simplicial complex with a cocompact proper action by $Aut(F_n)$. Let $\sigma_n~\in ~Aut(F_n)$ which sends all the generators of $F_n$ to its inverse. So, $\langle~\sigma_n~\rangle~= \mathbb{Z}/ 2.$ Let $R_n$ be a graph consisting of one vertex and $n$-edges. Hence $\langle~\sigma_n~\rangle$ acts on $R_n$ by inverting each edge. It is a notable point that $\Pi A(F_n)~=~C_{Aut(F_n)}(\sigma_n).$ Let $X_n^{\sigma_n}$ be the fixed point subspace of $X_n$ corresponding to $\langle~\sigma_n~\rangle$. From \cite{Jensen} $C_{Aut(F_n)}(\sigma_n)$ acts on the contractible space $X_n^{\sigma_n}$ with finite stabilizer and finite quotient. Moreover $X_n^{\sigma_n}$ is $\langle~\sigma_n~\rangle$-equivariantly deformation retraction to $L_{\sigma_n}.$ The space $L_{\sigma_n}$ constructed from $X_n^{\sigma_n}$ by considering the essential marked grpahs only. We will denote $L_{\sigma_n~}$ by $L_{\Pi A(F_n)}.$ The vertices of $L_{\Pi A(F_n)}$ can be represented as essential marked graphs with a chosen based point, and two vertices are adjacent if one of them is obtained from the other by a forest collapse. 
 
 Recall that an edge $e$ of a $G$ graph $\Gamma$ is said to be inessential if it is in every maximal $G$-invariant subforest of $\Gamma$. A graph is said to be inessential if it contains at least one inessential edge. A graph is said to be essential if it is not inessential. For more details about $L_{\Pi A(F_n)}$, see \cite{Glover}, pp. 646-655.
 
 Since $\Pi A(F_n)$ acts on the contractible space $L_{\Pi A(F_n)}$ with finite stabilizers and finite quotient so the Dehn function of $\Pi A(F_n)$ is asymptotically equivalent to the function that provides the optimal upper bound on the area of the least area discs in $L_{\Pi A(F_n)}$, where the bound is expressed as a function of the length of the boundary of the disc.      
\subsection{ Simplicial complexes associated to $Out(F_n)$.}
  Let $K_n$ denote the spine of the outer space as defined in \cite{Culler}. The group $Out(F_n)$ acts on $K_n$, and the action is cocompact and proper.
  A marked graph is a finite metric graph $\Gamma$ together with a homotopy equivalence $g:~R_n \rightarrow \Gamma$, where $R_n$ is a fixed graph with one vertex and $n$-loops. In a marked graph, two vertices are adjacent if one can be obtained from the other by a forest collapse. A vertex of $K_n$ can be represented as a marked graph $(g, \Gamma)$ with degree of all vertices at least three. For details see \cite{Culler}.
  \subsection{Nielsen transformation and Nielsen graph}
  Let $S=\lbrace s_1,s_2,\dots,s_n\rbrace$ be an ordered set. The following transformations are known as elementary Nielsen transformations:
  \begin{enumerate}
      \item[N1.] Replace some $s_i$ by $s_i^{-1}.$
      \item[N2.] Replace $s_i$ by $s_is_k$ for some $i\neq k.$
      \item[N3.] Delete $s_i$ if $s_i=1.$
  \end{enumerate}
  In the above transformations, all $s_j$ will be unchanged if $i\neq j.$ The product of elementary Nielsen transformations is known as Nielsen transformation.\\
  Let $G$ be a finitely generated group and the minimal number of generators for $G$ is at least $k.$ The Nielsen graph $N_k(G)$ is defined as follows:
  \begin{itemize}
      \item The vertex set of $N_k(G)$ consists of tuple $(g_1,g_2,\dots,g_k)$ such that $G=\langle g_1,g_2,\dots,g_k\rangle.$
      \item Two vertices of $N_k(G)$ are connected by an edge if one of them is obtained from the other by an elementary Nielsen move.
  \end{itemize}
  To explore more deeply into the Nielsen transformations and Nielsen graphs, refer to \cite{Nielsen2, Nielsen1}.
\section{Natural maps between simplicial complexes of $Out(F_n)$ and $\Pi A(F_n)$.}
There is a forgetful map $\phi_n~:~L_{\Pi A(F_n)} ~\rightarrow ~ K_n$ which simply forgets the base point, essentiality of marked graphs; this is a simplicial map.\\
Let $m<n.$ We fix an ordered basis for $F_n$, identify $F_m$ with the subgroup generated by the first $m$-elements of the basis, and identify $\Pi A(F_m)$ with the subgroup of $\Pi A(F_n)$ that leaves $F_m<F_n$ invariant and fixes the $n-m$ basis elements.\\
First, there is an equivariant map \begin{equation}
    i:~ L_{\Pi A(F_m)} \rightarrow L_{\Pi A(F_n)} 
\end{equation}
which attaches a bouquet of $n-m$ circles essentially to the basepoint of each essential marked graph and marks them with the last $n-m$ basis elements of $F_n.$ This map is simplicial since a forest collapse has no effect on the bouquet of circles at the base point.\\
Secondly, there is a restriction map $\rho:~K_n\rightarrow K_m,$ which can be describe in the following way: 
The chosen embedding $F_m<F_n$ corresponds to choosing an $m$-petal subrose $R_m\subset R_n.$ A vertex in $K_n$ is given by a graph $\Gamma$  marked with a homotopy equivalence $g:~R_n ~\rightarrow ~ \Gamma,$ and the restriction of $g$ to $R_m$ lifts to a homotopy equivalence $\hat{g}:~ R_m~ \rightarrow~ \hat{\Gamma},$ where $\hat{\Gamma}$ is the covering space corresponding to $g_*(F_m).$ There is a canonical retraction $r$ of $\hat{\Gamma}$ onto its compact core, i.e. the smallest connected subgraph containing all nontrivial embedded loops in $\Gamma.$ Let $\hat{\Gamma_0}$ be the graph obtained by erasing all vertices of valence $2$ from the compact core and define $\rho(g,\Gamma)~=~(r~\circ ~\hat{g},\hat{\Gamma_0}).$ To gain a deeper understanding see \cite{Bridson}. We now present the following lemma without proof, for details of proof see \cite{Bridson},
\begin{lemma} \label{dia}
    For $m<n,$ the restriction map $\rho:~K_n\rightarrow K_m$ is simplicial.
\end{lemma}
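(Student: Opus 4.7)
\medskip

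\noindent\textbf{Proof proposal.} The plan is to exploit the concrete simplicial structure on $K_n$: a simplex corresponds to a chain of forest collapses, so it suffices to verify that if two vertices $v=(g,\Gamma)$ and $v'=(g',\Gamma')$ of $K_n$ are joined by an edge --- i.e.\ $\Gamma'=\Gamma/F$ for some finite subforest $F\subset\Gamma$ and $g'=p\circ g$ for the quotient map $p:\Gamma\to\Gamma'$ --- then $\rho(v)$ and $\rho(v')$ are either equal or joined by an edge in $K_m$. Since an arbitrary simplex of $K_n$ is a chain of such collapses, compatibility on edges upgrades automatically to the assertion that simplices are sent to simplices of equal or lower dimension.

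First I would lift the collapse to the covers. Since $p$ is a homotopy equivalence, the identification it induces on fundamental groups carries $g_\ast(F_m)$ to $g'_\ast(F_m)$, so the cover $\hat{\Gamma}\to\Gamma$ corresponding to $g_\ast(F_m)$ and the cover $\hat{\Gamma}'\to\Gamma'$ corresponding to $g'_\ast(F_m)$ are compatibly related by a canonical map $\hat{p}:\hat{\Gamma}\to\hat{\Gamma}'$ whose point preimages are the components of $\hat{F}=p^{-1}(F)$. Each component of $F$ is a tree and therefore lifts as a disjoint union of homeomorphic trees, so $\hat{F}$ is a (possibly infinite) forest in $\hat{\Gamma}$ and $\hat{p}$ is precisely its collapse.

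Next I would descend to the compact cores. Write $\hat{C}\subset\hat{\Gamma}$ and $\hat{C}'\subset\hat{\Gamma}'$ for the compact cores and $r,r'$ for the canonical retractions. By uniqueness of the compact core as the smallest connected subgraph carrying all embedded loops, $\hat{p}(\hat{C})=\hat{C}'$ and the restriction of $\hat{p}$ to $\hat{C}$ collapses the finite subforest $\hat{F}\cap\hat{C}$ onto $\hat{C}'$; the new marking is $r'\circ\hat{p}\circ\hat{g}=(\hat{p}|_{\hat{C}})\circ(r\circ\hat{g})$, i.e.\ the old marking post-composed with this collapse. Finally, the erase-valence-$2$ operation is compatible with the collapse: any surviving valence-$2$ vertex of $\hat{C}$ remains bivalent in $\hat{C}'$, while collapsing an edge of $\hat{F}\cap\hat{C}$ either deletes such a vertex or leaves the valence structure of the other edges untouched, so the induced map $\hat{\Gamma}_0\to\hat{\Gamma}_0'$ is again a forest collapse (possibly empty, in which case $\rho(v)=\rho(v')$).

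The main obstacle I anticipate is exactly the last step: keeping track of which edges of $\hat{F}$ actually appear inside the compact core $\hat{C}$, and checking that stripping bivalent vertices interacts cleanly with the collapse so that what is a priori only a homotopy equivalence between $\hat{\Gamma}_0$ and $\hat{\Gamma}_0'$ is in fact realised as a genuine forest collapse in the bivalent-free graphs. Once these compatibilities are recorded, the simpliciality of $\rho$ drops out.
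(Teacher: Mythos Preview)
The paper does not prove this lemma at all; immediately before the statement it reads ``We now present the following lemma without proof, for details of proof see \cite{Bridson}.'' So there is no in-paper argument to compare your proposal against.

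That said, your outline is the natural one and matches the argument in Bridson--Vogtmann \cite{Bridson}: reduce to checking what $\rho$ does on edges of $K_n$, lift a forest collapse $p:\Gamma\to\Gamma/F$ to the relevant covers, observe that the lift $\hat p$ collapses the preimage forest $\hat F$, restrict to compact cores, and then verify compatibility with the operation of erasing bivalent vertices. The only place requiring genuine care is, as you correctly flag, the last step: one must check that after passing to the valence-$\geq 3$ graphs $\hat\Gamma_0$ and $\hat\Gamma_0'$ the induced map is still realised by collapsing a (possibly empty) forest rather than merely being a homotopy equivalence. This is a finite combinatorial check --- track how maximal arcs through bivalent vertices in $\hat C$ behave under collapsing $\hat F\cap\hat C$, and note that collapsing a tree $T$ produces a vertex whose valence is the number of edges leaving $T$ --- and presents no conceptual obstacle. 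Your proposal is sound.
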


\begin{lemma}
    For $m<n,$ the following diagram of simplicial maps commutes:

$$\begin{CD}
L_{\Pi A(F_m)} @>i>> L_{\Pi A(F_m)}\\
@VV\phi_mV @VV\phi_nV\\
K_m @<\rho<< K_n
\end{CD}$$   
\end{lemma}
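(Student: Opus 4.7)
The plan is to reduce the commutativity check to the $0$-skeleton and then trace a single vertex around the square. Since all four maps are simplicial---the three maps $\phi_m$, $\phi_n$, $i$ by construction, and $\rho$ by \lemref{dia}---it suffices to verify $\rho \circ \phi_n \circ i = \phi_m$ on vertices, because two simplicial maps agreeing on $0$-cells agree on all simplices.

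So fix an essential pointed marked graph $v = (g, \Gamma, *) \in L_{\Pi A(F_m)}$. The down-then-right corner is immediate: $\phi_m(v) = (g, \Gamma)$, viewed as a vertex of $K_m$ by forgetting the basepoint and the essentiality decoration. Going right-then-down, $i(v)$ wedges a bouquet $B$ of $n-m$ circles at $*$, with the last $n-m$ basis elements of $F_n$ as marking, yielding $(g', \Gamma')$ where $\Gamma' = \Gamma \vee B$ and $g'$ restricts to $g$ on $R_m \subset R_n$; the map $\phi_n$ then simply drops the basepoint.

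The core of the argument, and where I expect the main work to be, is the computation of $\rho(g', \Gamma')$. The crucial identification is $g'_*(F_m) = g_*(F_m) = \pi_1(\Gamma, *)$, which realizes $g'_*(F_m)$ as the first free factor of $\pi_1(\Gamma', *) = \pi_1(\Gamma, *) * F_{n-m}$. From this---for instance via Stallings' folding construction, or by noting that the inclusion $\Gamma \hookrightarrow \Gamma'$ is $\pi_1$-injective onto this factor---I would conclude that the cover $\hat{\Gamma}'$ is obtained from a single embedded copy of $\Gamma$ by attaching an infinite tree at each lift of $*$. All embedded loops of $\hat{\Gamma}'$ then lie within $\Gamma$, so its compact core is exactly $\Gamma$. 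Since the forgetful map $\phi_m$ lands in $K_m$, every vertex of $\Gamma$ already has valence $\geq 3$, so erasing valence-$2$ vertices is vacuous. Finally, the lift $\hat{g}$ of $g = g'|_{R_m}$ takes values in this embedded copy of $\Gamma$, and the retraction $r$ is the identity there, so $r \circ \hat{g} = g$. Hence $\rho \circ \phi_n \circ i(v) = (g, \Gamma) = \phi_m(v)$, completing the square.
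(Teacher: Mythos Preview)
Your proof is correct and follows essentially the same route as the paper: both trace a vertex $(g,\Gamma;*)$ around the square, identify the relevant cover of $\Gamma\vee B$ as a copy of $\Gamma$ with trees attached (the paper counts $2(n-m)$ such trees, one for each oriented new petal), and observe that $\rho$ deletes these trees to recover $(g,\Gamma)$. You are more explicit than the paper about the reduction to the $0$-skeleton and about why the marking $r\circ\hat g$ agrees with $g$; one small imprecision is the phrase ``at each lift of $*$''---the trees are attached at the single basepoint lift inside the embedded $\Gamma$, with the remaining lifts of $*$ living inside those trees.
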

\begin{proof}
    Let $(g,\Gamma;v)$ be an essential marked graph with a base point in $L_{\Pi A(F_m)}.$ Then the essential marked graph $i(g,\Gamma;v)$ with base point at $v$ is obtained by attaching $n-m$ loops essentially at the base point $v$ labelled by the elements $a_{m+1},\dots, a_n$ of our fixed basis for $F_n.$ Now $(g_n,\Gamma_n):~=~\phi_n\circ i(g,\Gamma;v)$ is obtained by forgetting the base point $v$ and the essentiality of $(g,\Gamma)$,  and the cover of $(g_n,\Gamma_n)$ corresponding to $F_m<F_n$ is obtained from a copy of $(g,\Gamma)$ (with labels) by attaching $2(n-m)$ trees. (These trees are obtained from the Cayley graph of $F_n$ as follows: one cuts at an edge labeled $a_i^{\epsilon},$ with $i=m+1,\dots,n$ and $\epsilon= \pm 1,$ takes one component of the result, and then attaches the hanging edge to the base point $v$ of $\Gamma.$) The effect of $\rho$ is to delete these trees.
\end{proof}
\section{Proof of the theorem \ref{palin}.}
\begin{proof}
In \cite{Hatcher} authors proved an exponential upper bound on the Dehn function of $Aut(F_3)$ and $Out(F_3)$ for all $n\geq 3.$ In \cite{Mosher} authors established an exponential lower bound by using their general results on quasi-retractions to reduce to the case $n=3.$ So it is enough to prove our Theorem \ref{palin} for $n=3.$\\
In light of the Corollary \ref{cor} and Lemma \ref{dia}, it suffices to exhibit a sequence of loops $l_p$ in the 1-skeleton of $L_{\Pi A(F_3)}$ whose lengths are bounded by a linear function of $p$ and whose filling area when projected to $K_3$ grows exponentially as a function of $p.$ Instead of loops we will consider words in $\Pi A(F_3)$, but standard quasi-isometric arguments show this is equivalent. Consider words of the form:

\begin{equation}
    W_p:=A_{ij}^{-p}A_{kj}^{-p}A_{ij}^p A_{ki} A_{ij}^p A_{kj}^{-p} A_{ij}^{-p} A_{ki}^{-1}.
\end{equation}
We will prove that the word $W_p$ is null-homotopic word in $\Pi A(F_3).$
\begin{equation}
    A_{ij}(a_i)=a_j~a_i~a_j,~ A_{ij}^{-1}(a_i)=a_j^{-1}~a_i~a_j^{-1},~ A_{ij}(a_k)=a_k~\forall~k\neq i.
\end{equation}
\begin{equation}
\begin{split}
W_p(a_k) & = A_{ij}^{-p}A_{kj}^{-p}A_{ij}^p A_{ki} A_{ij}^p A_{kj}^{-p} A_{ij}^{-p} A_{ki}^{-1}(a_k) \\
 & = A_{ij}^{-p}A_{kj}^{-p}A_{ij}^p A_{ki} A_{ij}^p A_{kj}^{-p} A_{ij}^{-p}(a_i^{-1}a_ka_i^{-1})\\
 & = A_{ij}^{-p}A_{kj}^{-p}A_{ij}^p A_{ki} A_{ij}^p A_{kj}^{-p} (a_i^{-1}a_ka_i^{-1})\\
 & = A_{ij}^{-p}A_{kj}^{-p}A_{ij}^p A_{ki} A_{ij}^p (a_i^{-1} a_j^{-p} a_k a_j^{-p} a_i^{-1})\\
 & = A_{ij}^{-p}A_{kj}^{-p}A_{ij}^p A_{ki}  (a_i^{-1} a_j^{-p} a_k a_j^{-p} a_i^{-1})\\
 & = A_{ij}^{-p}A_{kj}^{-p}A_{ij}^p   (a_i^{-1} a_j^{-p}  a_i a_k a_i a_j^{-p} a_i^{-1})\\
 & = A_{ij}^{-p}A_{kj}^{-p}   (a_j^{p}  a_k a_j^{p})\\
 & = A_{ij}^{-p}   (a_j^{p} a_j^{-p} a_k a_j^{-p} a_j^{p})\\
 & = A_{ij}^{-p}   (a_k)\\
 & = a_k
\end{split}
\end{equation}

\begin{equation}
\begin{split}
W_p(a_i) & = A_{ij}^{-p}A_{kj}^{-p}A_{ij}^p A_{ki} A_{ij}^p A_{kj}^{-p} A_{ij}^{-p} A_{ki}^{-1}(a_i) \\
 & = A_{ij}^{-p}A_{kj}^{-p}A_{ij}^p A_{ki} A_{ij}^p A_{kj}^{-p} (a_j^{-p}a_ia_j^{-p})\\
 & = A_{ij}^{-p}A_{kj}^{-p}A_{ij}^p A_{ki} (a_j^{-p} a_j^p a_i a_j^p a_j^{-p})\\
 & = A_{ij}^{-p}A_{kj}^{-p}(a_j^p a_i a_j^p)\\
 & = A_{ij}^{-p}(a_j^p a_i a_j^p)\\
 & = a_j^p a_j^{-p} a_i a_j^{-p} a_j^p\\
 & = a_i
\end{split}
\end{equation}

Similarly, we can compute $W_p(a_j)~=~a_j$. Therefore, $W_p~\equiv~1$, indicating $W_p$ represents a null-homotopic word in $\Pi A(F_3).$

We will interpret these words as loops. Note that $A_{ij}=\rho_{ij}\lambda_{ij}$ where $\lambda_{ij}$ and $\rho_{ij}$ are elementary transvection defined as follows: \begin{equation}
    \lambda_{ij}(a_i)=a_ja_i, ~~\rho_{ij}(a_i)=a_ia_j.
\end{equation}
So $W_p$ is the product of $12p+4$ elementary transvections. There is a connected subcomplex of the 1-skeleton of $L_{\Pi A(F_3)}$ spanned by roses and Nielsen graphs. We say roses are adjacent if they have distance $2$ in this graph.

Let $I \in L_{\Pi A(F_3)}$ be the rose marked by the identity map $R_3\rightarrow R_3.$ Each elementary transvection $\tau$ moves $I$ to an adjacent rose $\tau I,$ which is connected to $I$ by a Nielsen graph $N_{\tau}$ in $L_{\Pi A(F_3)}$. A composition $\tau_1\dots \tau_s$ of elementary transvections gives a path through adjacent roses $I, \tau _1 I, \dots, \tau_1\dots \tau_s I;$ the Nielsen graph connecting $\sigma I$ to $\sigma \tau I$ is $\sigma N_{\tau}.$ Thus the word $W_p$ corresponds to a loop $l_p$ in $L_{\Pi A(F_3)}.$ \cite[Theorem A]{Vogtmann} which states "$Aut(F_3)$ and $Out(F_3)$ satisfy a strictly exponential isoperimetric inequality." Therefore there is an exponential lower bound on the filling area of $\phi_3 \circ l_p$ in $K_3.$ Using Lemma \ref{2.3} and Corollary \ref{2.4}, we will have the exponential lower bound of the filling area of $l_p$ in $L_{\Pi A(F_3)}$. Hence the Dehn function of the palindromic automorphism group $\Pi A(F_n)$ is exponential. 
\end{proof}

%
%


\begin{thebibliography}{19}
\bibitem{Vogtmann}
Martin R. Bridson, Karen Vogtmann: On the geometry of the automorphism group of a free group. Bull. Math Londres. Soc. 27 (1995), 544-552.

\bibitem {Bridson}
Martin R. Bridson, Karen Vogtmann: The Dehn function of $Out(F_n)$ and $Aut(F_n)$. Annales De L\'Institut Fourier, Gernoble, 62, 5 (2012), 1811-1817. 

\bibitem{Culler}
M. Culler, K. Vogtmann: Moduli graphs and automorphisms of free groups. Invent. Math. 84 (1986), no. 1. 91-119. 

\bibitem {Riley}
T Riley: Dehn functions.  in: Office hours with a geometric group theorist, 146–175.
Princeton University Press, Princeton, NJ, 2017

\bibitem {Riley1}
T Riley: What is Dehn function? \url {https://pi.math.cornell.edu/~riley/papers/What_is_a_Dehn_function/What_is_a_Dehn_function.pdf}

\bibitem {Glover}
Henry H. Glover, Craig A. Jensen: Geometry for palindromic automorphism groups of free groups. Comment. Math. Helv. 75(2000) 644-667.


\bibitem{Hatcher} A Hatcher, K Vogtmann: Isoperimetric inequalities for automorphism groups of free groups. Pacific j. Math. 173 (1996), 2, 425-441.

\bibitem{GRO} M. Gromov, Asymptotic invariants of infinite groups, in: Geometric Group Theory.
Vol. 2, London Math. Soc. Lecture Note Ser. 182, Cambridge University, Cambridge (1993), 1–295, (Sussex) 1991.
\bibitem{Mosher} M. Handel, L. Mosher: Lipschitz retraction and distortion for subgroups of $Out(F_n)$. \hyperlink{arXiv:1009.5018}{arXiv:1009.5018}, 2010.
\bibitem{Nielsen1} A Myropolska: Nielsen equivalence in Gupta-Sidiki groups. Proceedings of American Mathematical Society. Volume 145, Number 8, August 2017, Pages 3331–3342. \hyperlink{, Pages 3331–3342
http://dx.doi.org/10.1090/proc/13612}{, Pages 3331–3342
http://dx.doi.org/10.1090/proc/13612}.
\bibitem{Nielsen2} Daniel E. Cohen: Combinatorial group theory a topological approach. London Mathematical Society Student Texts. 14. Cambridge University Press, Cambridge.
\bibitem{Jensen} C. A. Jensen: Contractibility of fixed point sets of auter space. Topology and its Applications. \hyperlink{https://doi.org/10.1016/S0166-8641(01)00074-8}{https://doi.org/10.1016/S0166-8641(01)00074-8}. 
\bibitem{Collins} D. Collins: Palindromic automorphism of free groups, in: Combinatorial and Geometric group theory, in: London Mat. Soc. Lecture Note Ser., Vol 204. Cambridge Univ. Press, Cambridge, 1995, PP. 63-72.
\bibitem{KG} Valeriy G. Bardakov, Krishnendu Gongopadhyay, Mahender Singh: Palindromic automorphisms of free groups. Journal of Algebra. 438, (2015). PP. 260-282.
\end{thebibliography}
\end{document}